\newtheorem{thm}{Theorem}[section]
\newtheorem{prop}[thm]{Proposition}
\newtheorem{cor}[thm]{Corollary}
\theoremstyle{remark}
\newcommand{\Z}{{\mathbb Z}}
\newcommand{\Q}{{\mathbb Q}}
\begin{document}

\title[On a conjecture of Erd\"{o}s]{On a conjecture of Erd\"{o}s and certain Dirichlet series}

\author[Tapas Chatterjee and M. Ram Murty]{Tapas Chatterjee\textsuperscript{1} and M. Ram Murty\textsuperscript{2}}

\address[T. Chatterjee]
      {Department of Mathematics, 
      Indian Institute of Technology Ropar, 
      Punjab -140001,
      India.}
\address[M. Ram Murty]
       {Department of Mathematics and Statistics,
       Queen's University, Kingston, Ontario,
       Canada, K7L3N6.}
\email[Tapas Chatterjee]{tapasc@iitrpr.ac.in}

\email[M. Ram Murty]{murty@mast.queensu.ca}

\subjclass[2010]{11M06, 11M20}

\keywords{Erd\"os conjecture, non-vanishing of Dirichlet series, Okada's criterion.}

\maketitle

\footnotetext[1]{Research of the first author was supported by a post doctoral fellowship at Queen's University.}

\footnotetext[2]{Research of the second author was supported by an NSERC Discovery grant and a Simons fellowship.}

\begin{abstract}

Let $f:\Z/q\Z\rightarrow\Z$ be such that $f(a)=\pm 1$ for $1\le a<q$, and $f(q)=0$. Then Erd\"{o}s 
conjectured that $\sum_{n\ge1}\frac{f(n)}{n} \ne 0$. For $q$ even, it is easy
to show that the conjecture is true. 
The case $q\equiv 3$ ( mod $4$) was solved by
 Murty and Saradha.  In this paper, we show that this 
conjecture is true for $82\%$ of the remaining integers $q\equiv 1$ ( mod $4$). 

\end{abstract}

\section{\bf Introduction}
In a written communication with Livingston, Erd\"os made the following conjecture (see \cite{AL} ):
if $f$ is a periodic arithmetic function with period $q$ and
$$
f(n) = \begin{cases} 

 \pm 1  & \text{ if $q \nmid n$,}\\

 0  & \text{ otherwise, }

\end{cases}
$$
then 
$$
L(1,f)=\sum_{n= 1}^{\infty}\frac{f(n)}{n} \ne 0
$$
where the $L$-function $L(s,f)$ associated with 
$f$ is defined by the series

\begin{equation}\label{1}
L(s,f):=\sum_{n=1}^\infty \frac{f(n)}{n^s}.
\end{equation}
In 1973, Baker, Birch and Wirsing (see Theorem $1$ of \cite{BBW}), using Baker's theory of linear forms in 
logarithms, proved the conjecture for $q$ prime.
In 1982, Okada \cite{TO1} established the conjecture if $2\varphi(q)+1>q$. Hence, if $q$ is a 
prime power or a product of two distinct odd primes, the conjecture is true. In 2002, R. Tijdeman \cite{RT} proved 
the conjecture is true for periodic completely multiplicative functions $f$.
Saradha and Tijdeman \cite{ST} showed that if $f$ is 
periodic and multiplicative with $| f ( p^k)| < p-1$ for every prime divisor  $p$ of $q$ and every positive integer $k$, then the conjecture is true.
\par
 It is easy to see that 
\begin{equation*}
L(1,f)= \sum_{n =1}^{\infty}\frac{f(n)}{n} 
\end{equation*}
exists if and only if $\sum_{n= 1}^{q}f(n)=0$. If $q$ is even and $f$ takes values $\pm 1$ with $f(q)=0$, then 
$\sum_{n = 1}^{q}f(n)\ne 0$. Hence the conjecture holds for even $q$.
\par
In 2007, Murty and Saradha \cite{MS1} proved that if $q$ is odd and $f$ is an odd integer 
valued odd periodic function  then the conclusion of the conjecture holds. In 2010, they 
proved that the Erd\"os conjecture is true if $q\equiv 3$ ( mod $4$) 
(see Theorem $7$ of \cite{MS}). Thus the conjecture is open only in cases where 
$q\equiv 1$ (mod $4$). However, it seems that a novel idea will be needed to deal with these cases. 
In this paper, we adopt a new density-theoretic approach which is orthogonal to earlier methods. 
Here is the main consequence of our method:
\par
\begin{thm}
\sl Let $S(X)=|\{ q\equiv 1 ~(~{\rm mod} ~4), ~q\le X |$ Erd\"{o}s conjecture is true for $q \}|$. \\
Then $$\underset{X \rightarrow \infty}{\liminf} \frac{S(X)}{X/4}\ge 0.82.$$ 
\end{thm}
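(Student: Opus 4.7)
The key tool is Okada's criterion stated in the introduction: $2\varphi(q)+1 > q$ implies $L(1,f) \ne 0$ for every admissible $f$. Thus the theorem reduces to showing
\[
\liminf_{X \to \infty} \frac{\#\{q \le X : q \equiv 1 \pmod 4,\ 2\varphi(q)+1 > q\}}{X/4} \ge 0.82.
\]
Since $q \equiv 1 \pmod 4$ is odd, $\varphi(q)/q = \prod_{p \mid q}(1-1/p)$, and the Okada inequality is equivalent to $\varphi(q)/q > 1/2$ up to a boundary correction of size $O(1/q)$ that contributes nothing to the density. By the Chinese Remainder Theorem, the event $\{q \equiv 1 \pmod 4\}$ is asymptotically independent of each divisibility event $\{p \mid q\}$ for odd primes $p$, so the density in question equals the density of odd $q$ with $\varphi(q)/q > 1/2$.

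I would attack the complementary ``bad'' density $D_{\mathrm{bad}}$ of odd $q$ with $\varphi(q)/q \le 1/2$ by a truncation/sieve argument. For a parameter $y$, write $q = q_y \cdot q^y$ with $q_y$ the $y$-smooth part, so that $\varphi(q)/q = (\varphi(q_y)/q_y)\prod_{p \mid q^y}(1-1/p)$. The first factor depends only on $q \bmod \prod_{p \le y}p$ and is computed exactly as a finite sum over subsets $S$ of odd primes $\le y$; the second factor is $1+o(1)$ on average, and the contribution of the large-prime correction to $D_{\mathrm{bad}}$ is bounded via Mertens' estimate $\sum_{p > y} 1/p^2 = O(1/(y \log y))$. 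An efficient packaging is the union bound over \emph{minimal bad} subsets $B$ of odd primes (those with $\prod_{p \in B}(1-1/p) \le 1/2$ but no proper bad subset):
\[
D_{\mathrm{bad}} \le \sum_{B \text{ minimal bad}} \prod_{p \in B}\frac{1}{p}.
\]
The leading contributors are $\{3,5,7\}, \{3,5,11\}, \{3,5,13\}, \{3,7,11,13\}, \ldots$, giving the terms $1/105, 1/165, 1/195, 1/3003,\ldots$.

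The main obstacle is the numerical book-keeping: one must take $y$ large enough, and enumerate enough minimal bad sets of the form $\{3,5,p,q\}, \{3,7,11,p\}, \{3,7,13,p\},\ldots$, to push the resulting estimate to $D_{\mathrm{bad}} \le 0.18$. The delicate point is controlling both the combinatorial tail of minimal bad sets and the Mertens-type correction from primes beyond $y$, all while ensuring no slippage from inclusion-exclusion. Once the density bound is in place, Okada's criterion immediately supplies $L(1,f) \ne 0$ for the $82\%$ of $q \equiv 1 \pmod 4$ where the Okada inequality holds, yielding the theorem.
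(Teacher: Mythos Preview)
Your approach is correct but takes a genuinely different route from the paper. You invoke Okada's 1982 sufficient condition $2\varphi(q)+1>q$ and reduce the theorem to bounding the density of odd $q$ with $\varphi(q)/q<1/2$. The paper instead extracts from the \emph{full} Okada criterion a new necessary condition for failure (Proposition~3.1: if the conjecture is false mod $q$ then $\sum_{d\mid q,\,d\ge 3}1/\varphi(d)\ge 1$), and then bounds the count of such $q$ by optimizing the $r$-th moment $\sum_{q}\bigl(\sum_{d\mid q,\,d\ge 3}1/\varphi(d)\bigr)^{r}$ over real $r$, with the minimum near $r\approx 3.85$ (an alternative argument via Wirsing's theorem and Mertens' theorem is also given).

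Your criterion is in fact uniformly at least as strong as the paper's: since $\sum_{d\mid q}1/\varphi(d)\ge q/\varphi(q)$ with equality exactly for squarefree $q$, the paper's hypothesis $\sum_{d\mid q,\,d\ge 3}1/\varphi(d)<1$ implies $\varphi(q)/q>1/2$, but not conversely (e.g.\ $q=45$). Consequently your method actually delivers far more than the stated $82\%$: the density of odd $q$ with $\varphi(q)/q<1/2$ is about $2\%$ (the dominant contributions come from $q$ divisible by one of $105$, $165$, $195$), so a routine truncation---much less delicate than you fear---comfortably yields roughly $98\%$, and even a crude Markov bound on the tail already beats $0.18$. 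What the paper's approach buys is the explicit Proposition~3.1 and a flexible moment framework, both presented as being of independent interest; numerically, however, your direct use of the older Okada inequality is both simpler and sharper for the density statement.
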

\par
In other words, the Erd\"{o}s conjecture is true for at least 82 \% of the integers $q \equiv 1 $ (mod 4).  
Our method does not extend to show that the Erd\"{o}s conjecture is true for 100 \% of the moduli $q\equiv 1$ (mod $4$). 
We examine this question briefly at the end of the paper. It seems to us that more ideas are needed to resolve the 
conjecture fully. 
\par
These questions have a long history beginning with Baker, Birch and Wirsing \cite{BBW}. Their work was generalized 
by Gun, Murty and Rath \cite{GMR} to the setting of algebraic number fields. The forthcoming paper \cite{CM} gives new proofs of some of the background results of
this area. We also refer the reader to \cite{RT} for an expanded survey of the early history.

\section{\bf Notations and Preliminaries}
From now onwards, we denote the field of rationals by $\Q$, the field of algebraic numbers by $\overline{\Q}$,
Euler's totient function by $\varphi$ and Euler's constant by $\gamma$.
We say a function $f$ is Erd\"{o}sian mod $q$ if $f$ is a periodic function with period $q$ and
$$
f(n) = \begin{cases} 

 \pm 1  & \text{ if $q \nmid n$,}\\

 0  & \text{ otherwise.}

\end{cases}
$$
Also we will write $f(X)\lesssim g(X)$ to mean 

$$\underset{X \rightarrow \infty}{\limsup} \frac{f(X)}{g(X)}\le 1.$$
Similarly, we write $f(x)\gtrsim g(x)$ to mean

$$\underset{X \rightarrow \infty}{\liminf} \frac{f(X)}{g(X)}\ge 1.$$

\subsection{\bf Okada's criterion}
\par
\begin{prop}
 Let the $q$-th cyclotomic polynomial $\Phi_q$ be irreducible over the field $\Q(f(1),\cdots, f (q))$. 
Let $M(q)$ be the set of positive integers which are composed of prime factors of $q$.

Then $L(1,f)=0$ if and only if 

\begin{equation*}
 \sum_{m\in M(q)}{f(am)\over m}=0
\end{equation*}
for every $a$ with $1\le a <q, (a,q)=1$, and 

\begin{equation*}
 \sum_{\substack{r=1\\ (r,q)>1}}^qf(r)\epsilon(r,p)=0
\end{equation*}
for every prime divisor $p$ of $q$,  where
$$
 \epsilon(r,p)=\begin{cases} 
v_p(r)  & \text{ if $v_p(r)<v_p(q)$,}\\

v_p(q)+\frac{1}{p-1}  & \text{ otherwise }
\end{cases}
$$
and for any integer $r$, $v_p(r)$ is the exponent of $p$ dividing $r$.
\end{prop}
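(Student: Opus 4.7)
The plan is to reduce the vanishing of $L(1,f)$ to the vanishing of coefficients in a linear combination of a distinguished collection of logarithms of algebraic numbers — namely the numbers $\log(1-\zeta_q^a)$ for $1\le a<q$ with $(a,q)=1$, together with $\log p$ for primes $p\mid q$. The hypothesis that $\Phi_q$ is irreducible over $K:=\Q(f(1),\ldots,f(q))$ is precisely what ensures that the Galois group $\mathrm{Gal}(K(\zeta_q)/K)$ is the full $(\Z/q\Z)^*$, so that, combined with Baker's theorem on linear independence of logarithms of algebraic numbers, one can separate the coefficient equations cleanly. The two conditions in the proposition then arise as the explicit forms of these coefficient equations.

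First I would rewrite the series. Every positive integer $n$ factors uniquely as $n=am$ with $(a,q)=1$ and $m\in M(q)$, which splits $\sum_{n\ge 1}f(n)/n$ according to the coprime residue class $a$ of its part coprime to $q$. After regularizing with the convergence condition $\sum_a f(a)=0$, the classical identity
\[
\sum_{\substack{n\ge 1\\ n\equiv a\,(q)}}\frac{1}{n}\;-\;\frac{\log N}{q}\;\longrightarrow\;-\frac{1}{q}\bigl(\gamma+\psi(a/q)\bigr),
\]
together with Gauss's closed form for $\psi(a/q)$ as a $\Q$-linear combination of $\log(1-\zeta_q^b)$ when $(a,q)=1$, and an iterated use of the multiplication formula $\psi(pz)=\tfrac{1}{p}\sum_{k=0}^{p-1}\psi(z+k/p)+\log p$ when $(a,q)>1$, yields an expansion of the form
\[
L(1,f)\;=\;\sum_{\substack{1\le a<q\\ (a,q)=1}}c_a\,\log(1-\zeta_q^a)\;+\;\sum_{p\mid q}d_p\,\log p,
\]
with $c_a,d_p\in K$ computed explicitly from $f$.

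Next I would apply Baker's theorem. Modulo the norm-type relations $\prod_{(a,q)=1}(1-\zeta_q^a)=\Phi_q(1)$ and their Galois twists, the listed logarithms are $\overline{\Q}$-linearly independent; the hypothesis $[K(\zeta_q):K]=\varphi(q)$ lets one average the identity $L(1,f)=0$ over $\mathrm{Gal}(K(\zeta_q)/K)\cong(\Z/q\Z)^*$, separating contributions indexed by distinct Galois orbits. The net conclusion is that $c_a=0$ for every $a$ coprime to $q$ and $d_p=0$ for every prime $p\mid q$.

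Finally, I would identify these coefficients with the stated conditions. A short manipulation shows that $c_a=0$ is exactly $\sum_{m\in M(q)}f(am)/m=0$, while $d_p=0$ unpacks to $\sum_{(r,q)>1}f(r)\epsilon(r,p)=0$; the correction $\tfrac{1}{p-1}$ in the second branch of $\epsilon(r,p)$ comes from the tail geometric series $\sum_{k\ge v_p(q)}p^{-k}=\tfrac{1}{(p-1)p^{v_p(q)-1}}$ that appears when $v_p(r)\ge v_p(q)$ and the multiplication formula for $\psi$ can no longer refine the $p$-adic structure of $r$ further. The main technical obstacle lies precisely in this bookkeeping: carefully tracking each prime $p\mid q$ through the iterated multiplication formulas and verifying the exact piecewise form of $\epsilon(r,p)$ at the threshold $v_p(r)=v_p(q)$.
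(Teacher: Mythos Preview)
The paper does not prove this proposition at all: it is quoted as background and attributed to Saradha--Tijdeman \cite{ST} as a refinement of Okada \cite{TO}. So there is no in-paper argument to compare against. Your outline is, in fact, the strategy of those cited papers: expand $L(1,f)$ via the digamma function and Gauss's formula into a $K$-linear combination of the logarithms $\log(1-\zeta_q^a)$ for $(a,q)=1$ and $\log p$ for $p\mid q$; use the hypothesis $[K(\zeta_q):K]=\varphi(q)$ to get the full Galois action of $(\Z/q\Z)^*$; and combine Baker's theorem with Galois averaging to force each coefficient to vanish. The identification of $c_a=0$ with $\sum_{m\in M(q)}f(am)/m=0$ and of $d_p=0$ with the $\epsilon(r,p)$-condition is precisely Okada's computation, and your explanation of the $1/(p-1)$ term via the geometric tail at the threshold $v_p(r)=v_p(q)$ is the right one.

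One point to tighten if you write this out in full: the phrase ``modulo the norm-type relations \ldots\ the listed logarithms are $\overline{\Q}$-linearly independent'' is doing a lot of work. What one actually uses is that the cyclotomic units $(1-\zeta_q^a)/(1-\zeta_q)$ for $1<a<q/2$, $(a,q)=1$, together with the primes $p\mid q$, are multiplicatively independent algebraic numbers; Baker then gives $\overline{\Q}$-linear independence of their logarithms. The irreducibility hypothesis enters exactly so that averaging $L(1,f)=0$ over all of $(\Z/q\Z)^*$ is legitimate over $K$, converting the resulting independent equations into the full list of $\varphi(q)+\omega(q)$ conditions in the statement. Your sketch gestures at this correctly, but it is the genuine content of the proof rather than mere bookkeeping.
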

\par
This Proposition is a modification, due to Saradha and Tijdeman \cite{ST}, of a 1986 result 
of Okada \cite{TO}. Note that Okada deduced the sufficient condition
$2\varphi(q)+1>q$ stated in the introduction from his original version of this criterion.
\subsection{Wirsing's Theorem}
The following proposition is due to Wirsing \cite{EW}. 
\par
\begin{prop}\label{wirsing}
\sl Let $f$ be a non-negative multiplicative arithmetic function, 
satisfying

\begin{equation*}
 |f(p)|\le G ~{\rm for ~all ~primes} ~p,
\end{equation*}

\begin{equation*}
 \sum_{p\le X}p^{-1}f(p)\log p\sim \tau\log X,
\end{equation*}
with some constants $G>0$, $\tau>0$ and

\begin{equation*}
 \sum_{p}\sum_{\substack{k\ge 2}}p^{-k}|f(p^k)|<\infty;
\end{equation*}
if $0<\tau\le 1$, then, in addition, the condition

\begin{equation*}
 \sum_{p}\sum_{\substack{k\ge 2 \\ p^k\le X}}|f(p^k)|=O(X/\log X)
\end{equation*}
is assumed to hold. Then

\begin{equation*}
 \sum_{n\le X}f(n)=(1+o(1))\frac{X}{\log X}\frac{e^{-\gamma\tau}}{\Gamma(\tau)}
 \prod_{p\le X}(1+\frac{f(p)}{p}+\frac{f(p^2)}{p^2}+\cdots).
\end{equation*}
\end{prop}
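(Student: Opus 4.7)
The plan is to follow the classical strategy for asymptotic evaluation of sums of multiplicative functions, combining the identity $\log n = \sum_{d\mid n}\Lambda(d)$ with a Tauberian/inductive argument and a final identification of the leading constant. Throughout, set $M(X) := \sum_{n\le X} f(n)$.

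First I would derive a fundamental convolution identity by starting from
$$\sum_{n\le X} f(n)\log n \;=\; \sum_{p^k\le X}\log p\sum_{m\le X/p^k} f(mp^k)$$
and exploiting multiplicativity by isolating the $p$-part of $m$. This rewrites the right-hand side as a sum over primes $p$ of $\log p \cdot f(p)$ times a partial sum of $f$, plus a remainder coming from $k\ge 2$. The two hypotheses on $\sum_p\sum_{k\ge2} p^{-k}|f(p^k)|$ and on $\sum_{p^k\le X,\,k\ge2}|f(p^k)|$ are precisely what make these higher prime-power contributions negligible, the latter being necessary exactly when $0<\tau\le 1$ because the leading term is then too small to absorb the prime-power error crudely.

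Next I would insert the hypothesis $\sum_{p\le X} p^{-1}f(p)\log p \sim \tau\log X$ by partial summation into the identity above, producing a Volterra-type renewal equation for $M(X)$. Wirsing's inductive argument, carried out by bounding $\limsup$ and $\liminf$ of the normalized ratio $M(X)\log X/(X(\log X)^{\tau-1})$ separately and showing they coincide, then yields
$$M(X) \;\sim\; c\,\frac{X}{\log X}\,(\log X)^{\tau}$$
for a constant $c$ depending on $f$. This Tauberian/inductive closure is the main obstacle: one must show that the integral equation has a one-parameter family of asymptotic solutions and that the correct one is selected by the positivity and growth of $M$. All the technical work --- uniform control of error terms, bootstrapping from weak to strong bounds --- lives in this step.

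Finally I would pin down the constant $c$. Writing formally $L(s,f) = \zeta(s)^{\tau} G(s)$ where
$$G(s) \;=\; \prod_p (1-p^{-s})^{\tau}\Bigl(1 + \tfrac{f(p)}{p^s} + \tfrac{f(p^2)}{p^{2s}} + \cdots\Bigr),$$
one checks using the convergence hypotheses that $G$ is holomorphic and nonzero at $s=1$. The singular behaviour of $\zeta(s)^{\tau}$ near $s=1$ contributes the factor $1/\Gamma(\tau)$ in the Selberg--Delange style, while Mertens' product $\prod_{p\le X}(1-1/p)^{-\tau} \sim (e^{\gamma}\log X)^{\tau}$ converts $G(1)$ into the advertised truncated Euler product with the compensating factor $e^{-\gamma\tau}$. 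Combining these bookkeeping steps gives
$$M(X) \;=\; (1+o(1))\,\frac{X}{\log X}\,\frac{e^{-\gamma\tau}}{\Gamma(\tau)}\prod_{p\le X}\Bigl(1 + \tfrac{f(p)}{p} + \tfrac{f(p^2)}{p^2} + \cdots\Bigr),$$
as claimed. Compared with Step~2, this identification of the constant is essentially routine once the shape of the asymptotic is in hand.
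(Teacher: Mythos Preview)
The paper does not prove this proposition at all: it is stated as a known result due to Wirsing and simply cited to \cite{EW}. So there is no ``paper's own proof'' to compare your sketch against.

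That said, your outline is a faithful high-level description of Wirsing's original argument: the convolution identity via $\log n=\sum_{d\mid n}\Lambda(d)$, the control of higher prime powers via the two auxiliary hypotheses, the renewal-type equation and the $\limsup/\liminf$ pinching to get the shape $M(X)\sim c\,X(\log X)^{\tau-1}$, and finally the identification of the constant through Mertens' theorem. For the purposes of this paper you need not supply any of this; citing Wirsing is what the authors do, and that is sufficient here.
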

\subsection{\bf Mertens Theorem}
We also need a classical theorem of Mertens in a later section.  We record the 
theorem here (see for example, p. 130 of \cite{murty}):
\par
\begin{prop}
 $\underset{X\rightarrow \infty}{\lim} \log X\underset{p\le X}{\prod}(1-\frac{1}{p})=e^{-\gamma}. $
\end{prop}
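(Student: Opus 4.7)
The plan is to reduce Mertens' third theorem to the partial-sum asymptotic for $\sum_{p\le X}1/p$ (Mertens' first theorem) and then identify the resulting constant via the pole of $\zeta(s)$ at $s=1$. Taking logarithms and Taylor-expanding each factor,
\begin{equation*}
\log\prod_{p\le X}\Bigl(1-\tfrac{1}{p}\Bigr) \;=\; -\sum_{p\le X}\frac{1}{p} \;-\; \sum_{p\le X}\sum_{k\ge 2}\frac{1}{k\,p^k},
\end{equation*}
and the double sum is dominated termwise by $1/(p(p-1))$, so it converges absolutely to some constant $S$ as $X\to\infty$, with tail $o(1)$. Consequently the theorem is equivalent to proving $\sum_{p\le X}1/p = \log\log X + (\gamma-S) + o(1)$.

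For the first-theorem input, I would start from the Chebyshev--Legendre identity $\log(n!)=\sum_{p^k\le n}\lfloor n/p^k\rfloor\log p$ compared with Stirling's formula $\log(n!) = n\log n - n + O(\log n)$. After isolating the $k=1$ contribution and trivially bounding the $k\ge 2$ tail, this yields $\sum_{p\le X}(\log p)/p = \log X + O(1)$. Partial summation against $1/\log t$ then delivers $\sum_{p\le X}1/p = \log\log X + B + O(1/\log X)$ for \emph{some} constant $B$; what remains is to show $B = \gamma - S$.

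The main obstacle is exactly this identification of the constant, and I would handle it through the Dirichlet series side. For $s>1$,
\begin{equation*}
\log\zeta(s) \;=\; \sum_p p^{-s} + T(s), \qquad T(s):=\sum_p\sum_{k\ge 2}\frac{1}{k\,p^{ks}},
\end{equation*}
with $T(s)\to S$ as $s\to 1^+$ by dominated convergence; and since $\zeta(s)=1/(s-1)+\gamma+O(s-1)$ near $s=1$, we have $\log\zeta(s) = -\log(s-1) + o(1)$. On the other hand, Abel summation gives
\begin{equation*}
\sum_p p^{-s} \;=\; (s-1)\int_2^{\infty}\Bigl(\sum_{p\le t}\tfrac{1}{p}\Bigr)\,t^{-s}\,dt,
\end{equation*}
and substituting the asymptotic $\sum_{p\le t}1/p = \log\log t + B + o(1)$ together with the change of variables $u=(s-1)\log t$ converts the $\log\log t$ factor into $-\log(s-1) - \gamma + o(1)$ via the classical identity $\int_0^\infty(\log u)e^{-u}\,du = \Gamma'(1) = -\gamma$. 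Thus $\sum_p p^{-s} = -\log(s-1)-\gamma+B+o(1)$. Equating the two expressions for $\log\zeta(s)+\log(s-1)$ in the limit $s\to 1^+$ forces $-\gamma+B+S = 0$, i.e.\ $B = \gamma - S$, completing the proof. The subtle analytic point is precisely this substitution/limit-interchange that conjures $\gamma$ out of a gamma-function integral; all remaining estimates are at the level of Chebyshev.
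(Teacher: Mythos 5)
Your argument is correct, and there is nothing in the paper to compare it against: the authors do not prove this proposition, they simply quote Mertens' theorem from p.~130 of \cite{murty}. What you have written is the standard proof. The reduction to $\sum_{p\le X}1/p=\log\log X+(\gamma-S)+o(1)$ is right, the Chebyshev--Stirling route to $\sum_{p\le X}(\log p)/p=\log X+O(1)$ and the partial summation to get the constant $B$ are standard, and the identification $B=\gamma-S$ via $\log\zeta(s)=\sum_p p^{-s}+T(s)$, the Abel-summation representation $\sum_p p^{-s}=(s-1)\int_2^\infty\bigl(\sum_{p\le t}1/p\bigr)t^{-s}\,dt$, and the substitution $u=(s-1)\log t$ producing $\int_0^\infty(\log u)e^{-u}\,du=-\gamma$ all check out (the $o(1)$ error in $A(t)$ contributes $o(1)$ after the split-at-$T$ argument, and the truncated Gamma integral converges to $-\gamma$ since the integrand is integrable near $0$). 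One cosmetic remark: you quote $\zeta(s)=1/(s-1)+\gamma+O(s-1)$, but you only use $\log\zeta(s)=-\log(s-1)+o(1)$, i.e.\ $\zeta(s)\sim 1/(s-1)$; the $\gamma$ in your final answer genuinely comes from $\Gamma'(1)$, not from the Stieltjes constant, and it is worth not conflating the two.
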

\section{\bf Exceptions to the  conjecture of Erd\"{o}s}
We say that the Erd\"{o}s conjecture is false (mod $q$), if there is an
Erd\"{o}sian  function $f$ for which $L(1,f)=0$. The following proposition
plays a fundamental role in our approach.
\par
\begin{prop}\label{crit}
\sl If the Erd\"{o}s conjecture is false (mod $q$) with
$q$ odd, then
\begin{equation*}
 1\le \sum_{\substack{d|q \\d\ge3}}\frac{1}{\varphi(d)}.
\end{equation*}
\end{prop}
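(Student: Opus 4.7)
The plan is to extract the desired inequality from the first half of Okada's criterion (Proposition 2.1), via the triangle inequality and a short Euler-product calculation. Suppose $f$ is Erd\"{o}sian mod $q$ with $L(1,f)=0$; since $f$ is integer-valued, $\Q(f(1),\ldots,f(q))=\Q$ and $\Phi_q$ is automatically irreducible over this field, so Okada's first relation holds: for every $a$ with $1\le a<q$ and $(a,q)=1$,
\begin{equation*}
\sum_{m\in M(q)}\frac{f(am)}{m}=0.
\end{equation*}

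Fix such an $a$. Isolating the $m=1$ term and using $|f(a)|=1$, the triangle inequality gives $1\le \sum_{m\in M(q),\,m\ge 2}|f(am)|/m$. Since $a$ is a unit mod $q$, the divisibility $q\mid am$ is equivalent to $q\mid m$, so $|f(am)|=1$ when $q\nmid m$ and $0$ otherwise. Restoring the $m=1$ term (allowed because $q\nmid 1$) yields
\begin{equation*}
\sum_{\substack{m\in M(q)\\ q\nmid m}}\frac{1}{m}\ge 2.
\end{equation*}

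Setting $Q=\prod_{p\mid q}p$, the Euler product gives $\sum_{m\in M(q)}m^{-1}=Q/\varphi(Q)$, while the sub-sum over multiples of $q$ contributes $(1/q)\cdot Q/\varphi(Q)$; subtracting, the previous inequality rearranges to $Q/\varphi(Q)\ge 2q/(q-1)$. Now $d\mapsto 1/\varphi(d)$ is multiplicative, and for each prime $p\mid q$ with exponent $\alpha=v_p(q)$,
\begin{equation*}
\sum_{k=0}^{\alpha}\frac{1}{\varphi(p^k)}=1+\frac{1}{p-1}\sum_{k=0}^{\alpha-1}p^{-k}\ge 1+\frac{1}{p-1}=\frac{p}{p-1},
\end{equation*}
so multiplying over $p\mid q$ yields $\sum_{d\mid q}1/\varphi(d)\ge Q/\varphi(Q)$.

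For odd $q$ the divisors $d\ge 3$ are exactly the divisors $d>1$, so subtracting the $d=1$ contribution and chaining the two inequalities,
\begin{equation*}
\sum_{\substack{d\mid q\\ d\ge 3}}\frac{1}{\varphi(d)}\ge \frac{Q}{\varphi(Q)}-1\ge \frac{q+1}{q-1}>1,
\end{equation*}
which is slightly stronger than the stated proposition. The only nontrivial step is the triangle-inequality bound in paragraph two; the rest is routine Euler-product bookkeeping, so I do not anticipate any real obstacle.
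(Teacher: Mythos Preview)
Your argument is correct. Both you and the paper start from Okada's relation $\sum_{m\in M(q)} f(m)/m=0$ and apply the triangle inequality after isolating the $m=1$ term, but the bookkeeping thereafter is organized differently. The paper groups $m\in M(q)$ according to $d=(m,q)$, writes $m=db_1$ with $(b_1,q/d)=1$, and then bounds the inner sum over $b_1$ by the full Euler product $\prod_{p\mid d}(1-1/p)^{-1}=d/\varphi(d)$, landing directly on $\sum_{d\mid q,\,d\ge 3}1/\varphi(d)$. You instead evaluate $\sum_{m\in M(q),\,q\nmid m}1/m$ \emph{exactly} as $(1-1/q)\,Q/\varphi(Q)$ and then pass to $\sum_{d\mid q}1/\varphi(d)$ via the local inequality $\sum_{k\le\alpha}1/\varphi(p^k)\ge p/(p-1)$. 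Your route is a touch sharper---it yields the strict bound $(q+1)/(q-1)>1$---while the paper's $\gcd$-decomposition reaches the target sum in one step without the intermediate quantity $Q/\varphi(Q)$.
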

\begin{proof}

 By the hypothesis, there is an Erd\"{o}sian function $f$ mod $q$ for which, we have $L(1,f)=0$.
 Applying Okada's criterion, we get

 \begin{equation}\label{11}
  \sum_{b\in M(q)}\frac{f(b)}{b}=0.
 \end{equation}

Let $d=(b,q)$, so that $b=db_1$ with $(b_1, q/d)=1$. Then \eqref{11} can be written as 

\begin{equation*}
 -f(1)=\sum_{\substack{d|q \\ d\ge3}}\frac{1}{d}\sum_{\substack{b_1\in M(q) \\ (b_1,q/d)=1}}\frac{f(db_1)}{b_1}.
\end{equation*}

Taking absolute value of both sides, we get

\begin{equation}\label{12}
 1\le \sum_{\substack{d|q \\ d\ge3}}\frac{1}{d}\sum_{\substack{b_1\in M(d)}}\frac{1}{b_1}.
\end{equation}

 Notice that the inner sum can be written as

$$
 \sum_{\substack{b_1\in M(d)}}\frac{1}{b_1}
 = \prod_{p|d}\left(1+\frac{1}{p}+\frac{1}{p^2}+\cdots \right) 
 =\prod_{p|d}\left(1-\frac{1}{p}\right)^{-1} 
 =\frac{d}{\varphi(d)}. $$
Hence from \eqref{12}, we get

\begin{equation*}
 1\le \sum_{\substack{d|q \\ d\ge3}}\frac{1}{\varphi(d)}.
\end{equation*} 

\end{proof}
\par
 Two immediate corollaries of the above proposition are the following:
\par
\begin{cor}
\sl If $q$ is a prime power or a product of two distinct odd primes, 
then the Erd\"{o}s conjecture is true (mod $q$).
\end{cor}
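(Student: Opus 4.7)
The plan is to apply Proposition~\ref{crit} in contrapositive form. That proposition shows that failure of the Erd\"os conjecture mod an odd $q$ forces $\sum_{d\mid q,\, d\ge 3} 1/\varphi(d) \ge 1$, so I only need to check that this divisor sum is strictly less than $1$ for every $q$ in the two families of the corollary. (The case $q$ even requires no work; it is already disposed of in the introduction by the observation that $\sum_{n=1}^q f(n) \ne 0$ when $q$ is even.)

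For the prime power case $q = p^k$ with $p$ odd, I would enumerate the divisors $d\ge 3$ of $q$ as $p, p^2, \dots, p^k$ and evaluate the resulting finite geometric sum, bounding it above by the full tail
\begin{equation*}
\sum_{j=1}^{k} \frac{1}{\varphi(p^j)} \;\le\; \sum_{j\ge 1}\frac{1}{p^{j-1}(p-1)} \;=\; \frac{p}{(p-1)^2}.
\end{equation*}
Plugging in the worst case $p=3$ gives the bound $3/4 < 1$, and the bound is strictly smaller for larger odd primes, so Proposition~\ref{crit} is contradicted for every odd prime power.

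For the second family, $q = p_1 p_2$ with $p_1 < p_2$ distinct odd primes, the divisors $d\ge 3$ are exactly $p_1$, $p_2$, and $p_1 p_2$, so the sum becomes
\begin{equation*}
\frac{1}{p_1-1}+\frac{1}{p_2-1}+\frac{1}{(p_1-1)(p_2-1)} \;=\; \frac{p_1+p_2-1}{(p_1-1)(p_2-1)}.
\end{equation*}
Clearing denominators, I would observe that this is strictly less than $1$ if and only if $(p_1-2)(p_2-2) > 2$, which holds for every pair of distinct odd primes: the minimal pair $(3,5)$ already gives $(1)(3)=3>2$, and the left-hand side is monotone increasing in each variable.

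The argument is essentially a bookkeeping exercise, and I do not anticipate any genuine obstacle once Proposition~\ref{crit} is in hand. The only point warranting a moment's care is ensuring that the prime-power bound, where I pass from a finite geometric sum to its infinite tail, still yields a strict inequality $<1$; this is automatic because the tail equals $p/(p-1)^2 \le 3/4 < 1$ uniformly for odd $p$.
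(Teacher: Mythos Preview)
Your proof is correct and is exactly the intended approach: the paper itself simply calls this ``a pleasant elementary exercise,'' leaving the verification that $\sum_{d\mid q,\,d\ge 3}1/\varphi(d)<1$ to the reader, and you have carried out precisely that computation via Proposition~\ref{crit}. Your handling of the even prime-power case $q=2^k$ by deferring to the introduction is also appropriate, since Proposition~\ref{crit} assumes $q$ odd.
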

\begin{proof}
This is a pleasant elementary exercise.
\end{proof}
\par
Hence we have recovered the two basic cases of the conjecture which were given 
in the introduction, of course, also as a consequence of Okada's
Criterion.
\par
Let $d(n)$ be the divisor function, i.e. $d(n)$ is the number of divisors of $n$. 

\par
\begin{cor}\label{new}
\sl If the smallest prime factor of $q$ is at least $d(q)$, then 
the Erd\"{o}s conjecture is true for $q$.
\end{cor}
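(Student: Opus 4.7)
The plan is to derive a contradiction from Proposition~\ref{crit}. Since the Erdős conjecture is already known for even $q$, we may assume $q$ is odd. Let $p$ denote the smallest prime factor of $q$ and write $\tau=d(q)$; the hypothesis of the corollary is $p\ge\tau$. Assuming for contradiction that the conjecture fails mod $q$, Proposition~\ref{crit} gives
$$1\le \sum_{\substack{d\mid q \\ d\ge 3}}\frac{1}{\varphi(d)},$$
and the goal is to show that the right-hand side is strictly smaller than $1$.

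The core estimate is the elementary fact that $\varphi(d)\ge p-1$ for every divisor $d\ge 3$ of $q$, with equality only at $d=p$. The inequality is immediate: any such $d$ has a prime factor $p'\ge p$, so by multiplicativity $\varphi(d)\ge\varphi(p')=p'-1\ge p-1$. The equality case reduces (via the factorization $d=p^{a}m$ with $(m,p)=1$) to $a=1$ and $\varphi(m)=1$; since $q$ is odd, this forces $m=1$ and hence $d=p$. Because $q$ is odd, every divisor of $q$ other than $1$ is automatically $\ge 3$, so the sum has exactly $\tau-1$ terms, yielding the coarse bound
$$\sum_{\substack{d\mid q \\ d\ge 3}}\frac{1}{\varphi(d)} \le \frac{\tau-1}{p-1}\le 1$$
from the hypothesis $p\ge\tau$.

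What remains is to promote this to strict inequality. If $q=p$ is prime, then the sum equals $1/(p-1)\le 1/2<1$ directly. Otherwise $\tau\ge 3$ and $q$ itself is a divisor strictly larger than $p$, so by the equality analysis $\varphi(q)>p-1$ and the term $1/\varphi(q)$ is strictly less than $1/(p-1)$, forcing the coarse bound to become strict. In either case, the sum is strictly less than $1$, contradicting Proposition~\ref{crit}. The only subtlety to watch is exactly this last step: the hypothesis $p\ge\tau$ allows $p=\tau$, in which case the crude bound $(\tau-1)/(p-1)\le 1$ is potentially tight, and one must spend the slack coming from a divisor $>p$ to secure the strict inequality needed for the contradiction.
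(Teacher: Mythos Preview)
Your proof is correct and follows essentially the same route as the paper: bound each term of $\sum_{d\mid q,\,d\ge 3}1/\varphi(d)$ by $1/(p-1)$, count the terms, and conclude that the sum is strictly less than $1$, contradicting Proposition~\ref{crit}. The only cosmetic difference is that the paper invokes Corollary~3.2 to assume $q$ has at least two prime factors (and counts $d(q)-2$ terms), whereas you keep all $d(q)-1$ terms and instead handle the prime case separately to secure the strict inequality.
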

\begin{proof}
 Let $l$ be the smallest prime factor of $q$. From the above proposition, if the Erd\"{o}s conjecture
 is false (mod $q$), then we have 
 \begin{eqnarray*}
 1&\le& \sum_{\substack{d|q \\ d\ge3}}\frac{1}{\varphi(d)} \\
 & < & \frac{1}{\varphi(l)}\sum_{\substack{d|q \\ d\ge3}} 1= \frac{d(q)-2}{l-1},
 \end{eqnarray*}
the strict inequality in the penultimate step coming from the fact
that $q$ has at least two prime divisors.  Thus, 
 $l <  d(q)$. Hence if $l\ge d(q)$, then the Erd\"{o}s conjecture is true (mod $q$).

\end{proof}
\par
Note that, the Corollary \ref{new} was not known 
previously.  It implies that the conjecture is true for any squarefree
number $q$ with $k$ prime factors, provided the smallest prime factor of $q$ is 
greater than $2^k$.  
Proposition \ref{crit} opens the door for a new approach to the study
of Erd\"os's conjecture.  
Let us  consider the following 
$$S_1(X)=|\{ q\equiv 1 ~(~{\rm mod} ~4), ~q\le X | \text{Erd\"{o}s conjecture is false (mod $q$)} \}|$$ 
Then, we have 
\begin{eqnarray*}
 S_1(X)&\le& \sum_{\substack{q\le X \\ q\equiv 1 ~(~{\rm mod} ~4)}}\sum_{\substack{d|q \\ d\ge3}}\frac{1}{\varphi(d)} 
 \le \sum_{\substack{3\le d\le X\\ d ~{\rm odd}}}\frac{1}{\varphi(d)}\sum_{\substack{q\le X \\ q\equiv 1 ~(~{\rm mod} ~4) \\ d|q}} 1 \\
 &\le& \sum_{\substack{3\le d\le X\\ d ~{\rm odd}}}\frac{1}{\varphi(d)} \left(\frac{X}{4d}+O(1)\right) 
\le \sum_{\substack{3\le d\le X\\ d ~{\rm odd}}}\frac{1}{\varphi(d)}\frac{X}{4d}+
 O\left(\sum_{\substack{3\le d\le X}}\frac{1}{\varphi(d)}\right)\\
&\le& \sum_{\substack{3\le d\le X\\ d ~{\rm odd}}}\frac{1}{\varphi(d)}\frac{X}{4d}+O(\log X) 
 \end{eqnarray*}
where we have used the well-known fact that (see for example, p. 67 of \cite{murty})

\begin{equation*}
\sum_{\substack{d\le X}}\frac{1}{\varphi(d)}= O(\log X).
\end{equation*}
 Hence, we get
 \begin{eqnarray*}
 S_1(X)&\lesssim& \frac{X}{4}\sum_{\substack{3\le d\\ d ~{\rm odd}}}\frac{1}{d\varphi(d)}\\
&\lesssim& \frac{X}{4} \left(\prod_{p ~{\rm odd}}\left(1+\frac{1}{p\varphi(p)}+\frac{1}{p^2\varphi(p^2)}+\cdots\right)-1\right)\\
&\lesssim&\frac{X}{4} \left(\prod_{p ~{\rm odd}}\left(1+\frac{1}{p(p-1)}+\frac{1}{p^3(p-1)}+\cdots\right)-1\right)\\
&\lesssim&\frac{X}{4} \left(\prod_{p ~{\rm odd}}\left(1+\frac{1}{p(p-1)}\left(1+{1\over p^2}+{1\over p^4}+\cdots\right)\right)-1\right)\\
&\lesssim&\frac{X}{4} \left(\prod_{p ~{\rm odd}}\left(1+\frac{p}{(p-1)(p^2-1)}\right)-1\right).
\end{eqnarray*}
The product is easily computed numerically and we have:
$
S_1(X)\lesssim 0.33( {X/4}).
$
As an immediate corollary we get the following:
\begin{cor}
 $|\{ q\equiv 1 ~(~{\rm mod} ~4), ~q\le X |$ Erd\"{o}s conjecture is true for $q \}|$ \\
$\gtrsim 0.67\frac{X}{4}$.
\end{cor}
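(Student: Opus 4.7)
The plan is to derive the corollary directly from the bound $S_1(X) \lesssim 0.33(X/4)$ obtained in the display immediately preceding it. The idea is that the integers $q \equiv 1 \pmod 4$ with $q \le X$ split into two disjoint classes: those for which Erd\"os's conjecture holds (counted by $S(X)$ in the theorem) and those for which it fails (counted by $S_1(X)$). Once $S_1(X)$ is controlled from above, the complementary count is controlled from below.

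First I would note the elementary count
\begin{equation*}
\bigl|\{q \le X : q \equiv 1 \pmod 4\}\bigr| = \frac{X}{4} + O(1),
\end{equation*}
so that
\begin{equation*}
S(X) = \frac{X}{4} - S_1(X) + O(1).
\end{equation*}
Dividing through by $X/4$ and applying the bound $S_1(X) \lesssim 0.33(X/4)$, the $O(1)$ term vanishes in the limit and I obtain
\begin{equation*}
\liminf_{X \to \infty} \frac{S(X)}{X/4} \;\ge\; 1 - \limsup_{X \to \infty} \frac{S_1(X)}{X/4} \;\ge\; 1 - 0.33 \;=\; 0.67,
\end{equation*}
which is exactly the claim in the notation $S(X) \gtrsim 0.67\,(X/4)$.

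There is essentially no obstacle at this step; the genuine work has already been done in the preceding computation, which combined Proposition \ref{crit} (the Okada-type divisor criterion) with the interchange of summation $\sum_{q \le X,\, q \equiv 1(4),\, d \mid q} 1 = X/(4d) + O(1)$, the closed form $\sum_{b \in M(d)} 1/b = d/\varphi(d)$, and the standard estimate $\sum_{d \le X} 1/\varphi(d) = O(\log X)$ to absorb the error terms. The only thing worth double-checking is that the $O(\log X)$ error from the $O(1)$ in the divisor count is negligible relative to the $X/4$ main term after dividing, which is precisely why the argument is phrased with $\lesssim$ and $\gtrsim$ rather than with sharp inequalities: both sides of the inequality in the display above are asymptotic statements in terms of $\limsup$/$\liminf$, and the lower bound $0.67$ is obtained once the Euler product $\prod_{p\text{ odd}}(1 + p/((p-1)(p^2-1)))$ is computed numerically to yield a value less than $1.33$.
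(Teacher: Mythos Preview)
Your proposal is correct and is precisely the complementary counting argument the paper has in mind: the paper simply labels this an ``immediate corollary'' of the bound $S_1(X)\lesssim 0.33(X/4)$ without writing out the subtraction, and you have filled in exactly those details.
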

\subsection{Refinement using the second moment}
By considering higher moments, we can improve the lower bound in the above corollary. 
We begin with the second moment. We include these estimates since they are of independent 
interest and self-contained.
\par
\begin{prop}
 $|\{ q\equiv 1 ~(~{\rm mod} ~4), ~q\le X |$ Erd\"{o}s conjecture is true for $q \}|$ \\
$\gtrsim 0.78\frac{X}{4}$. 
\end{prop}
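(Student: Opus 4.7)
The plan is to refine the first-moment estimate of the previous subsection by applying Proposition~\ref{crit} in squared form. Set $T_q := \sum_{d \mid q,\, d \ge 3} 1/\varphi(d)$. Whenever the Erd\"{o}s conjecture fails modulo $q$, Proposition~\ref{crit} yields $T_q \ge 1$, and hence $T_q^2 \ge 1$. Summing over the bad moduli one obtains
\begin{equation*}
S_1(X) \;\le\; \sum_{\substack{q \le X \\ q \equiv 1 \,({\rm mod}\ 4)}} T_q^2.
\end{equation*}
The motivation for squaring is that for the vast majority of $q$ one has $T_q < 1$, so $T_q^2 < T_q$ there; passing to the square therefore damps the wasted mass on ``good'' $q$ while still majorising the indicator of the bad set.

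The next step is to expand the square and swap the order of summation, obtaining
\begin{equation*}
S_1(X) \;\le\; \sum_{\substack{d_1,d_2 \ge 3 \\ d_1,\, d_2 \text{ odd}}} \frac{1}{\varphi(d_1)\varphi(d_2)} \,\bigl|\{q \le X : q \equiv 1 \,({\rm mod}\ 4),\ [d_1,d_2] \mid q\}\bigr|,
\end{equation*}
where the oddness restriction comes from $d_i \mid q$ together with $q$ odd. By the Chinese Remainder Theorem the inner count equals $X/(4[d_1,d_2]) + O(1)$. Substituting $[d_1,d_2] = d_1 d_2/(d_1,d_2)$ and bounding the $O(1)$ contribution by $\bigl(\sum_{d \le X} 1/\varphi(d)\bigr)^2 = O((\log X)^2)$, one arrives at
\begin{equation*}
S_1(X) \;\lesssim\; \frac{X}{4}\sum_{\substack{d_1, d_2 \ge 3 \\ d_1,\, d_2 \text{ odd}}} \frac{(d_1,d_2)}{\varphi(d_1)\varphi(d_2)\,d_1 d_2}.
\end{equation*}

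To evaluate this sum I would complete the range to all odd $d_1, d_2 \ge 1$ and subtract the two boundary contributions from $d_1 = 1$ and $d_2 = 1$; each of these equals $G - 1$, where $G = \prod_{p \text{ odd}}\bigl(1 + \tfrac{p}{(p-1)(p^2-1)}\bigr)$ is precisely the constant already computed in the first-moment argument. For the completed sum I would apply Gauss's identity $(d_1,d_2) = \sum_{e \mid (d_1,d_2)} \varphi(e)$ and interchange the summations to rewrite it as
\begin{equation*}
\sum_{\substack{e \ge 1 \\ e \text{ odd}}} \varphi(e)\,H(e)^2, \qquad H(e) := \sum_{\substack{d \text{ odd} \\ e \mid d}} \frac{1}{d\,\varphi(d)}.
\end{equation*}
Since both $\varphi(e)$ and $H(e)$ are multiplicative in $e$, this factors into an explicit Euler product over odd primes whose local factor at $p$ is a convergent rational function of $p$.

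Numerical evaluation of this Euler product should give the resulting constant as at most $0.22$, whence $S_1(X) \lesssim 0.22\,(X/4)$ and therefore the good count is $\gtrsim 0.78\,(X/4)$. The main obstacle is the Euler-product bookkeeping: because $\varphi$ is not multiplicative across $d_1$ and $d_2$ when they share a prime, the local factor at $p$ is not a trivial square of the first-moment local factors, and one must carefully track how $H(e)$ behaves at primes dividing $e$ versus those not dividing $e$, and then combine with the $d_i = 1$ boundary corrections before comparing numerically with $2G - 1$.
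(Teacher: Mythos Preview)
Your proposal is correct and follows essentially the same argument as the paper: square the inequality from Proposition~\ref{crit}, expand, swap summation, replace $d_1\mid q,\ d_2\mid q$ by $[d_1,d_2]\mid q$, and estimate the inner count as $X/(4[d_1,d_2])+O(1)$ with an $O((\log X)^2)$ total error. The only difference is that where the paper simply says ``by a simple numerical calculation'' for the constant, you outline an explicit Euler-product evaluation via $(d_1,d_2)=\sum_{e\mid(d_1,d_2)}\varphi(e)$ and inclusion--exclusion on the boundary terms $d_i=1$; this is a reasonable way to carry out that computation and yields the same $0.22$.
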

\begin{proof}
Let us first consider the following inequality:
\begin{eqnarray*}
 S_1(X)&\le& \sum_{\substack{q\le X \\ q\equiv 1 ~(~{\rm mod} ~4)}}\left(\sum_{\substack{d|q \\ d\ge3}}\frac{1}{\varphi(d)}\right)^2 \\
&\le& \sum_{\substack{q\le X \\ q\equiv 1 ~(~{\rm mod} ~4)}}\sum_{\substack{d_1|q, ~d_2|q \\ 3\le d_1,~d_2 <q}}
\frac{1}{\varphi(d_1)\varphi(d_2)} \\ 
&\le& \sum_{\substack{3\le d_1,~d_2\le X \\ d_1,~d_2 ~{\rm odd}}}\frac{1}{\varphi(d_1)\varphi(d_2)} 
\sum_{\substack{q\le X \\ q\equiv 1 ~(~{\rm mod} ~4) \\ d_1|q,~d_2|q}} 1 \\
&\le& \sum_{\substack{3\le d_1,~d_2\le X \\ d_1,~d_2 ~{\rm odd}}}\frac{1}{\varphi(d_1)\varphi(d_2)} 
\sum_{\substack{q\le X \\ q\equiv 1 ~(~{\rm mod} ~4) \\ [d_1,d_2]|q}} 1 \\
&\le& \sum_{\substack{3\le d_1,~d_2\le X \\ d_1,~d_2 ~{\rm odd}}}\frac{1}{\varphi(d_1)\varphi(d_2)} 
\left(\frac{X}{4[d_1,d_2]}+O(1)\right).
 \end{eqnarray*}
 Hence, we have
\begin{eqnarray*}
S_1(X)&\le& \frac{X}{4}\sum_{\substack{3\le d_1,~d_2\le X \\ d_1,~d_2 ~{\rm odd}}}\frac{1}{\varphi(d_1)
\varphi(d_2)[d_1,d_2]}+O(\log^2 X).
 \end{eqnarray*}
 By a simple numerical calculation, we deduce that
 \begin{equation*}
S_1(X)\lesssim 0.22 \frac{X}{4}.  
 \end{equation*}
 Hence the conjecture holds for at least 78\% of the positive integers congruent to 1 mod 4.
\end{proof}
\par
Similarly one can compute higher fractional moments to get an optimal result. For any $r>1$, we have
\begin{equation*}
 S_1(X)\le\sum_{\substack{q\le X \\ q\equiv 1 ~(~{\rm mod} ~4)}}\left(\sum_{\substack{d|q \\ d\ge3}}\frac{1}{\varphi(d)}\right)^r.
\end{equation*}
We study this as a function of $r$.  
Using Maple we computed that the minimal value occurs at $r\sim3.85$ (see 
\cite{code}) and we get
\begin{equation*}
S_1(X)\lesssim 0.18 \frac{X}{4}.  
 \end{equation*}
Thus, we get $|\{ q\equiv 1 ~(~{\rm mod} ~4), ~q\le X |$ Erd\"{o}s conjecture is true for $q \}|$ \\
$\gtrsim 0.82\frac{X}{4}$, i.e.
$$\underset{X \rightarrow \infty}{\liminf} \frac{S(X)}{X/4}\ge 0.82.$$
Hence, we have shown the theorem stated in the Introduction, i.e. the conjecture 
holds for at least 82\% of the numbers congruent to 1 mod 4. 
\par
\subsection{An alternative approach}
In this subsection, we discuss an alternative approach to this problem. It leads to a slightly weaker result. 
However this method is of independent interest, so we record it here. We begin with a further refinement of  
Proposition \ref{crit} by considering fractional moments there. From Proposition \ref{crit}, 
we get if the Erd\"{o}s conjecture is false for odd $q$, then
\begin{equation*}
 1\le \sum_{\substack{d|q \\ d\ge3}}\frac{1}{\varphi(d)}.
\end{equation*}
Adding 1 both sides of the above inequality, we get 
\begin{equation*}
 2\le \sum_{\substack{d|q }}\frac{1}{\varphi(d)},
\end{equation*}
which can be rewritten as
\begin{equation*}
 1\le \frac{1}{2}\sum_{\substack{d|q }}\frac{1}{\varphi(d)}.
\end{equation*}
Hence for any $\alpha>0$, Proposition \ref{crit} can be rewritten as:
\par
\begin{prop}
 If Erd\"{o}s conjecture is false for odd $q$, then
\begin{equation*}
 1\le \frac{1}{2^\alpha}\left(\sum_{\substack{d|q }}\frac{1}{\varphi(d)}\right)^\alpha.
\end{equation*}
\end{prop}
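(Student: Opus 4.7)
The plan is straightforward, since the proposition is a direct algebraic consequence of Proposition \ref{crit}. The key observation is that because $q$ is odd, the divisor $2$ does not contribute to $\sum_{d\mid q} 1/\varphi(d)$, and the term $d=1$ contributes $1/\varphi(1) = 1$. Hence
$$
\sum_{d\mid q}\frac{1}{\varphi(d)} \;=\; 1 + \sum_{\substack{d\mid q \\ d\ge 3}}\frac{1}{\varphi(d)},
$$
so Proposition \ref{crit}, applied under the hypothesis that the Erd\"{o}s conjecture fails (mod $q$), upgrades to $\sum_{d\mid q} 1/\varphi(d) \ge 2$. This is exactly the inequality that appears displayed in the paragraph preceding the statement.

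From here I would simply raise both sides of $2 \le \sum_{d\mid q} 1/\varphi(d)$ to the positive power $\alpha$, using the monotonicity of $x \mapsto x^\alpha$ on $[0,\infty)$ for $\alpha > 0$, to obtain
$$
2^\alpha \;\le\; \Bigl(\sum_{d\mid q}\frac{1}{\varphi(d)}\Bigr)^\alpha.
$$
Dividing through by $2^\alpha$ gives the asserted inequality exactly.

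There is no real obstacle in the proof of this proposition itself; its purpose is preparatory. The genuine work will lie downstream in estimating the quantity $\sum_{q \le X,\, q\equiv 1\,(\mathrm{mod}\,4)} \bigl(\sum_{d\mid q} 1/\varphi(d)\bigr)^{\alpha}$, which will require expanding the $\alpha$-th power (via a binomial-type series for non-integer $\alpha$), controlling the resulting averaged divisor sums by Wirsing's theorem together with Mertens' theorem, and then optimizing over $\alpha > 0$. The $2^{-\alpha}$ prefactor introduced here is exactly what converts the moment bound into a quantitative upper density for the set of exceptional $q$, so although the proposition is one line of algebra, getting the constant in the right place is what makes the optimization meaningful.
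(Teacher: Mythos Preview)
Your proof is correct and follows exactly the same route as the paper: add the $d=1$ term (contributing $1/\varphi(1)=1$) to the inequality of Proposition~\ref{crit} to get $\sum_{d\mid q}1/\varphi(d)\ge 2$, then raise to the power $\alpha>0$ and divide by $2^\alpha$. Your side remark about the downstream estimate is slightly off---the paper does not expand the $\alpha$-th power but instead applies Wirsing's theorem directly to the multiplicative function $f_\alpha(q)=\bigl(\sum_{d\mid q}1/\varphi(d)\bigr)^\alpha$---but that does not affect the proof of the proposition itself.
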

As before, $S_1(X)=|\{ q\equiv 1 ~(~{\rm mod} ~4), ~q\le X |$ Erd\"{o}s conjecture is false for $q \}|$. 
Then from the above proposition, we get

\begin{equation*}
 S_1(X)\le \frac{1}{2^\alpha}\sum_{\substack{q\le X \\ q\equiv 1 ~(~{\rm mod} ~4)}}\left(\sum_{\substack{d|q }}
 \frac{1}{\varphi(d)}\right)^\alpha. 
\end{equation*}

Let $f_\alpha(q)=\left(\sum_{\substack{d|q }}\frac{1}{\varphi(d)}\right)^\alpha$ and $\chi$ be the 
non-trivial Dirichlet character mod 4. Then the above inequality becomes
\begin{equation}\label{13a}
 S_1(X)\le \frac{1}{2^{\alpha+1}}\left(\sum_{\substack{q\le X \\ q ~{\rm odd}}}f_\alpha(q)+
 \sum_{\substack{q\le X \\ q ~{\rm odd}}}\chi(q)f_\alpha(q)\right). 
\end{equation}
Again, note that $f_\alpha(q)$ is a multiplicative arithmetic function. One can check that it also satisfies 
all the other hypotheses of Wirsing's theorem (Proposition \ref{wirsing}) with $G=2^\alpha$ and 
$\tau=1$. So in light of Wirsing's theorem, we get 
\begin{equation*}
 \sum_{\substack{q\le X \\ q ~{\rm odd}}}f_\alpha(q)\sim X\frac{e^{-\gamma}}{\log X}
 \prod_{\substack{p\le X \\ p\ne 2}}\left(1+\frac{f_\alpha(p)}{p}+\frac{f_\alpha(p^2)}{p^2}+\cdots\right)
\end{equation*}
and 
\begin{equation*}
 \sum_{\substack{q\le X \\ q ~{\rm odd}}}\chi(q)f_\alpha(q)\sim X\frac{e^{-\gamma}}{\log X}
 \prod_{\substack{p\le X \\ p\ne 2}}\left(1+\frac{\chi(p)f_\alpha(p)}{p}+\frac{\chi(p^2)f_\alpha(p^2)}{p^2}+\cdots\right).
\end{equation*}
Again, from Mertens theorem we know that 
\begin{equation*}
 \prod_{p\le X}(1-1/p)\sim \frac{e^{-\gamma}}{\log X}. 
\end{equation*}
Hence we have
\begin{eqnarray*}
 \sum_{\substack{q\le X \\ q ~{\rm odd}}}f_\alpha(q)&\sim& \frac{X}{2} \prod_{\substack{p\le X \\ p\ne 2}}
 (1-1/p)\left(1+\frac{f_\alpha(p)}{p}+\frac{f_\alpha(p^2)}{p^2}+\cdots\right) \\ 
 &\sim& \frac{X}{2} p_1 ~({\rm say}) 
\end{eqnarray*}
and 
\begin{eqnarray*}
 \sum_{\substack{q\le X \\ q ~{\rm odd}}}\chi(q)f_\alpha(q)&\sim& \frac{X}{2} \prod_{\substack{p\le X \\ p\ne 2}}
 (1-1/p)\left(1+\frac{\chi(p)f_\alpha(p)}{p}+\frac{\chi(p^2)f_\alpha(p^2)}{p^2}+\cdots\right)\\
  &\sim& \frac{X}{2} p_2 ~({\rm say}).
\end{eqnarray*}
Now using the above two inequality \eqref{13a} becomes,
\begin{equation*}
 S_1(X)\lesssim \frac{X}{2^{\alpha+2}}(p_1+p_2).
\end{equation*}
Finally using Maple (see \cite{code}), we find that the quantity on the right hand side
is minimized at $\alpha\sim 8.11$ and we get 
\begin{equation*}
 S_1(X)\lesssim 0.20\frac{X}{4}.
\end{equation*}
Hence, we get  $$\underset{X \rightarrow \infty}{\liminf} \frac{S(X)}{X/4}\ge 0.80.$$
\par
\noindent
\textbf{Remarks}. One cannot hope to obtain 100 \% by these methods.  In fact, one can
show that there is a positive density (albeit small)
 of $q$ for which the inequality of Proposition \ref{crit} holds.  Indeed, since
$$\sum_{d|q} {1 \over \varphi(d)} \geq \prod_{p|q} \left( 1 + {1\over p-1}
\right) $$
we can make the product ( and hence the sum) arbitrarily large by ensuring that $q$ is divisible
by all the primes in an initial segment.  We can even ensure that
these primes are congruent to 1 (mod 4).  We then take numbers
which are divisible by this $q$ and congruent to 1 (mod 4) and deduce
that for all these numbers, the inequality in the proposition holds.  
Since the product on the right diverges slowly to infinity as we go through such 
numbers $q$, we obtain in this way, a small density of numbers for which the inequality
holds.  
\par
\noindent
{\bf Acknowledgements.}
We thank Michael Roth for his help on using the Maple language
as well as Sanoli Gun, Purusottam Rath and Ekata Saha for their comments on an
earlier version of this paper. We also thank the referee for helpful comments 
that improved the quality of the paper.


\begin{thebibliography}{10}

\bibitem{BBW}

A. Baker, B. J. Birch and E. A. Wirsing, {\em On a problem of Chowla}, J. Number Theory {\bf 5} (1973), 224-236.

\bibitem{CM}

T. Chatterjee and M. R. Murty, {\em Non-vanishing of Dirichlet series with periodic coefficients}, 
J. Number Theory, {\bf145} (2014), 1--21. 

\bibitem{code}  T. Chatterjee and M.R. Murty, Maple code at www.mast.queensu.ca/~murty/maplecode.pdf.

\bibitem{GMR}  S. Gun, M.R. Murty and P. Rath, {\em Linear independence of Hurwitz zeta values and a theorem of 
Baker-Birch-Wirsing over number fields}, Acta Arithmetica, {\bf 155} (2012), no. 3, 297--309.

\bibitem{AL}

A. Livingston, {\em The series $\sum_{n=1}^{\infty}\frac{f(n)}{n}$ for periodic $f$}, Canad. Math. Bull. {\bf 8} (1965) 413--432.



\bibitem{murty}  M. Ram Murty, Problems in Analytic Number Theory, 2nd edition, Springer, 2008.



\bibitem{MS}

M. Ram Murty and N. Saradha, {\em Euler-Lehmer constants and a conjecture of Erd\"{o}s}, J. Number Theory {\bf130} (2010), 2671-2682.

\bibitem{MS1}

M. Ram Murty and N. Saradha, {\em Transcendental values of the digamma function}, J. Number Theory {\bf 125} (2007) 298--318.



\bibitem{TO}

T. Okada, {\em Dirichlet series with periodic algebraic coefficients}, J. London Math. Soc. (2) {\bf33} (1986), no. 1, 13-21.

\bibitem{TO1}

T. Okada, {\em On a certain infinite series for a periodic arithmetical function}, Acta Arith. {\bf 40} (1982) 143--153.

\bibitem{ST}

N. Saradha and R. Tijdeman, {\em On the transcendence of infinite sums of values of rational functions}, 
J. London Math. Soc. (3) {\bf67} (2003), 580-592.

\bibitem{RT}

R. Tijdeman, {\em Some applications of Diophantine approximation}, 
Number Theory for the Millenium III, MA, 2002, pp. 261--284.

\bibitem{EW}  E. Wirsing, {\em Das asymptotische Verhalten von Summen 
\"uber multiplikative Funktionen},  Math. Ann. {\bf 143} (1961)  75-102.

\end{thebibliography}
\end{document}